\def\ds{\displaystyle}
\newtheorem{theorem}{Theorem}[section]
\newtheorem{lemma}[theorem]{Lemma}
\newtheorem{prop}[theorem]{Proposition}
\newtheorem{defn}[theorem]{Definition}
\newtheorem{rem}[theorem]{Remark}
\newenvironment{proof*}{\vskip 2mm\noindent {}}{\hfill $\Box$ \vskip 2mm}
\numberwithin{equation}{section}
\newcommand{\C}{{\mathbb{C}}}
\newcommand{\D}{{\mathbb{D}}}
\newcommand{\G}{{\mathbb{G}}}
\newcommand{\Z}{{\mathbb{Z}}}
\renewcommand{\O}{{\mathcal{O}}}
\newcommand{\eps}{\varepsilon}
\newcommand{\la}{\lambda}
\begin{document}

\title[Green function of the spectral ball]{Green functions of the spectral ball and symmetrized polydisk}

\author{P. J. Thomas, N. V. Trao, W. Zwonek}

\address{Universit\'e de Toulouse\\ UPS, INSA, UT1, UTM \\
Institut de Math\'ematiques de Toulouse\\
F-31062 Toulouse, France}
\email{pthomas@math.univ-toulouse.fr}

\address{ Department of Mathematics\\
Hanoi National University of Education\\
136 Xuan Thuy str - Cau Giay\\
Hanoi - Vietnam}
\email{ngvtrao@yahoo.com}

\address{Instytut Matematyki, Uniwersytet Jagiello\'nski, \L ojasiewicza 6, 30-348
 Krak\'ow, Poland}\email{Wlodzimierz.Zwonek@im.uj.edu.pl}

\begin{thanks}{This research has been supported in part by the Grant
no. N N201 361436 of the Polish Ministry for Higher Education.
Work on this paper was started during the
stay of the first named author at the Jagiellonian University, Cracow.}
\end{thanks}

\begin{abstract}
The Green function of the spectral ball is constant over the isospectral
varieties, is never less than the pullback of its counterpart
on the symmetrized polydisk, and is equal to it in the generic case
where the pole is a cyclic (non-derogatory) matrix.  When the pole is
derogatory, the inequality is always strict, and the difference
between the two functions depends on the order of
nilpotence of the strictly upper triangular
blocks that appear in the Jordan decomposition of the pole.  In particular,
the Green function of the spectral ball is not symmetric in its
arguments. Additionally, some estimates are given
for invariant functions in the symmetrized polydisc, e.g. (infinitesimal versions of) the Carath\'eodory distance and the Green function, that show that they are distinct in dimension greater
or equal to $3$.
\end{abstract}

\maketitle

\section{Introduction and statement of results}
\label{intro}

Let $\mathcal M_n$ be the set of all $n\times n$ complex matrices.
For $A\in\mathcal M_n$ denote by $sp(A)$ and $\ds
\rho(A)=\max_{\lambda\in sp(A)}|\lambda|$ the spectrum and the
spectral radius of $A,$ respectively. The notation $\|A\|$
will stand for an operator norm on the set of matrices
(chosen once and for all).

The \emph{spectral ball} $\Omega_n$
is the set
$$\Omega_n=\{A\in\mathcal M_n:\rho(A)<1\}.$$
The
characteristic polynomial of the matrix $A$ is denoted
$$
P_A(t):= \det(tI-A)=: t^n+\sum_{j=1}^n(-1)^j\sigma_j (A)t^{n-j},
$$
where $I\in \mathcal M_n$ is the unit matrix. We define
a map $\sigma$ from $\mathcal M_n$ to $\mathbb C^n$ by
$\sigma := (\sigma_1, \dots, \sigma_n)$.
 The \emph{symmetrized polydisk}
is $\mathbb G_n := \sigma (\Omega_n)$ is a bounded domain in $\mathbb C^n$,
which is a complete hyperbolic domain, and hyperconvex (and thus taut).

A matrix $A$ is \emph{cyclic} (or \emph{non-derogatory})
if it admits a cyclic vector, we then write $A \in \mathcal C_n$.
We say that $A$ is derogatory when $A \notin \mathcal C_n$.

\begin{defn}
The Green function with pole $p$ in a domain $\Omega$
is given by
$$
g_\Omega (p,z) := \sup \{ u(z) : u \in PSH_-(\Omega),
u(w) \le \log \|w-p\| + O(1) \}.
$$
\end{defn}

Let $\D$ stand for the unit disk in $\C$.

\begin{defn}
The Lempert function of a domain $D\subset\Bbb C^m$ is defined, for $z,w\in D$, as
$$
 l_D(z,w):=\inf\{|\alpha|: \alpha \in \mathbb D
\mbox{ and } \exists\varphi\in\mathcal O(\mathbb
D,D):\varphi(0)=z,\varphi(\alpha)=w\}.
$$
\end{defn}

\begin{defn}
The Carath\'eodory (pseudo)distance for a domain $D\subset\Bbb C^m$ is defined, for $w,z\in D$, as
$$
c_D^*(z,w):=\sup\{|f(w)|: f\in\mathcal O(D,\mathbb D),f(z)=0\}.
$$
\end{defn}

Immediate consequences of the definitions are that for any domain $D$
in $\C^n$,
$$
\log c_D^*(z,w)\le  g_D(z,w) \le \log l_D(z,w), \mbox{ and }
$$
$$
g_{\Omega_n}(V,M) \ge g_{\G_n} (\sigma (V), \sigma (M)).
$$
One can prove that $\log l_{\Omega_n}(0,M)=g_{\Omega_n}(0,M) = \log \rho(M)$. This follows
from Vesentini's theorem about the plurisubharmonicity of $\log \rho$ \cite{Ve}
and the facts that $\rho(\lambda A)=|\lambda| \rho( A)$, for $\lambda \in \mathbb C$
(see also \cite[Theorem 3.4.7, p. 52]{Au} and \cite{Jar-Pfl 1993}).


As is noted in
\cite{Edi-Zwo}, $\sigma(A)=\sigma(B)$ if and only if
there is an entire curve contained in $\Omega_n$ going through $A$ and $B$.
It follows from Liouville's theorem for subharmonic
functions that if $ \sigma (M)= \sigma (M')$, then
$g_{\Omega_n}(V,M)=g_{\Omega_n}(V,M')$.
So $g_{\Omega_n}(V,M)$ only depends on $\sigma(M)$.
One may wonder, then, whether for any $V,M$,
$$
g_{\Omega_n}(V,M) = g_{\G_n} (\sigma (V), \sigma (M)) ?
$$
We will prove this only happens  when $V \in \mathcal C_n$.

Let us proceed with some elementary reductions.
For any $Q \in \mathcal M_n^{-1}$ (the set of invertible matrices), the map
$M \mapsto Q^{-1} M Q$ is an automorphism of the spectral
ball preserving the spectrum, so
$$
g_{\Omega_n}(Q^{-1} V Q,M)=g_{\Omega_n}(V,Q M Q^{-1})=g_{\Omega_n}(V,M),
$$
thus we may always assume that our pole matrix $V$ is in Jordan form
(or any other convenient reduction by conjugation).

For any $\lambda \in Sp(V)$, denote
by $V_\lambda$ the restriction of $V$ to the stable subspace
$\ker (V-\la I_n)^n$. Let $n(\la):= \dim(\ker (V-\la I_n)^n)$
(the size of the Jordan block corresponding
to the eigenvalue $\lambda$)
and $m(\lambda):= \min \{k: (V_\lambda-\la I_{n(\la)})^k =0\}$  the order of nilpotence of $V_\lambda-\la I_{n(\la)}$.
Finally there exists $\lambda \in Sp(V)$ such that
 $m(\la)<n(\la)$
if and only if $V \notin \mathcal C_n$.

\begin{theorem}
\label{diff}
Let $V \in \Omega_n$.
\begin{enumerate}
\item
If $V \in \mathcal C_n$, then
$g_{\Omega_n}(V,M) = g_{\G_n} (\sigma (V), \sigma (M)).$
\item
If
$V \notin \mathcal C_n$,
then
there exists $X \in \mathcal M_n \setminus \{0\}$
such that
\begin{eqnarray}
\label{upest}
g_{\Omega_n}(V,V+\zeta X) & \ge & m(\lambda) \log |\zeta| + O(1), \mbox{ while } \\
\label{downest}
g_{\G_n} (\sigma (V), \sigma (V+\zeta X)) & \le & n(\lambda) \log |\zeta| + O(1).
\end{eqnarray}
\end{enumerate}
\end{theorem}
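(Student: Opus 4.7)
The inequality $g_{\Omega_n}(V,M) \ge g_{\G_n}(\sigma(V),\sigma(M))$ is noted in the introduction (pull back along $\sigma$). For the reverse I would start with an arbitrary competitor $u \in PSH_-(\Omega_n)$ for $g_{\Omega_n}(V,\cdot)$ and use the isospectral invariance recalled in the introduction: since $u$ is bounded above and the fibers of $\sigma$ are parametrized by entire curves, $u$ is constant on each fiber, so $u = v \circ \sigma$ for some $v : \G_n \to [-\infty,0]$. The cyclic hypothesis enters by providing a local holomorphic section $s$ of $\sigma$ through $V$ (since $\sigma$ is a submersion at cyclic points): choosing $s$ with $s(\sigma(V))=V$, one gets $v = u\circ s$ near $\sigma(V)$, which makes $v$ plurisubharmonic, and the Lipschitz estimate $\|s(z)-V\| \le C\|z-\sigma(V)\|$ forces $v(z) \le \log\|z-\sigma(V)\| + O(1)$. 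Thus $v$ is an admissible competitor for $g_{\G_n}(\sigma(V),\cdot)$, giving $u(M)=v(\sigma(M)) \le g_{\G_n}(\sigma(V),\sigma(M))$; taking the supremum over $u$ yields the desired equality.

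\textbf{Part (2): construction of $X$ and the bound (\ref{downest}).} Fix $\lambda \in sp(V)$ with $m(\lambda)<n(\lambda)$, put $V$ in Jordan form, and write $E := \ker(V-\lambda I)^n$. I would construct a nonzero $X$ supported on $E$ (vanishing on the complementary invariant subspace $E^c$) so that the $n(\lambda)$ eigenvalues of $V|_E+\zeta X|_E$ accumulating at $\lambda$ become $\mu_k(\zeta) = \lambda + \zeta\,\omega_k + O(\zeta^2)$, with $\omega_k$ the $n(\lambda)$-th roots of unity. Concretely, one glues the Jordan blocks of $V|_E$ at $\lambda$ into a single cyclic block: after a local similarity on $E$, $V|_E + \zeta X|_E$ reads $\lambda I + \zeta C$ with $C$ the companion matrix of $t^{n(\lambda)}-1$. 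Since $\sum_k \omega_k^{\,j} = 0$ for $1 \le j \le n(\lambda)-1$, the elementary symmetric functions of the $\lambda$-cluster change only in degree $n(\lambda)$, by $\pm \zeta^{n(\lambda)}$; the eigenvalues from $E^c$ are unperturbed, so a direct combination gives $\sigma(V+\zeta X)-\sigma(V) = O(\zeta^{n(\lambda)})$. The general bound $g_{\G_n}(p,z) \le \log\|z-p\| + O(1)$ then yields (\ref{downest}).

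\textbf{Part (2): the bound (\ref{upest}).} I would use the Vesentini plurisubharmonic function $u(M) := \log\rho(P(M))$, where $P$ is the minimal polynomial of $V$. Writing $P(t)=(t-\lambda)^{m(\lambda)}Q(t)$ with $Q(\lambda)\ne 0$, the identity $P(V)=0$ together with $\|P(W)\| \le C\|W-V\|$ near $V$ shows (after subtracting $\sup_{\Omega_n}\log\rho\circ P$, finite because $\rho(P(M)) \le \sup_{|t|<1}|P(t)|$) that $u$ is an admissible competitor for $g_{\Omega_n}(V,\cdot)$. Because $X$ vanishes on $E^c$, the matrix $V+\zeta X$ is block diagonal and $P$ kills the eigenvalues from $E^c$ (they are unperturbed eigenvalues of $V$); the nonzero eigenvalues of $P(V+\zeta X)$ are thus $P(\mu_k(\zeta)) \sim (\zeta\omega_k)^{m(\lambda)}Q(\lambda)$, of magnitude $\sim |\zeta|^{m(\lambda)}$. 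Hence $\rho(P(V+\zeta X)) \ge c\,|\zeta|^{m(\lambda)}$ and $u(V+\zeta X) \ge m(\lambda)\log|\zeta|+O(1)$, which is (\ref{upest}).

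\textbf{Main obstacle.} The heart of the argument is the explicit construction of $X$ from the Jordan data of $V|_E$ so that the eigenvalue deformation takes the prescribed roots-of-unity shape. For derogatory $V$ the Puiseux expansion of the spectrum is sensitive to how $X$ couples Jordan blocks of different sizes inside the centralizer of $V|_E$, and simultaneously achieving Puiseux exponent $1$ for every eigenvalue in the $\lambda$-cluster (needed for the sharp $|\zeta|^{m(\lambda)}$ behaviour of $\rho(P(V+\zeta X))$ in (\ref{upest})) while forcing the first $n(\lambda)-1$ elementary symmetric functions to cancel (needed for (\ref{downest})) is the combinatorial core of the proof.
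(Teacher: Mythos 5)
Your proof of \eqref{downest} hinges on producing a \emph{fixed} $X$ for which $\sigma(V+\zeta X)-\sigma(V)=O(\zeta^{n(\lambda)})$, and that is exactly the step you defer, so there is a genuine gap. Neither of the two things you offer in its place suffices. First, knowing only the leading Puiseux data $\mu_k(\zeta)=\lambda+\zeta\omega_k+O(\zeta^2)$ does not imply that the elementary symmetric functions of the cluster move only at order $\zeta^{n(\lambda)}$: already $\sum_k\mu_k=n(\lambda)\lambda+O(\zeta^2)$, so the $O(\zeta^2)$ error terms contaminate every $\sigma_i$ at order $2$. What is really needed is that the characteristic polynomial of the cluster equal $(t-\lambda)^{n(\lambda)}-c\,\zeta^{n(\lambda)}$ exactly (or with every coefficient perturbed by $O(\zeta^{n(\lambda)})$), a system of exact cancellation conditions on the entries of $X$, not a consequence of first-order spectral asymptotics. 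Second, the ``gluing'' you propose does not deliver this: completing the Jordan blocks of $V|_E$ into a single $n(\lambda)$-cycle yields a characteristic polynomial of the form $(t-\lambda)^{n(\lambda)}-c\,\zeta^{k}$ with $k$ the number of Jordan blocks in the $\lambda$-cluster, hence only the exponent $k<n(\lambda)$. The paper sidesteps the whole issue: it takes $X$ (supported on the $\lambda$-block) whose \emph{own} characteristic polynomial is $t^{n(\lambda)}-1$ and evaluates both Green functions along the curve $\zeta\mapsto\zeta X$ rather than $V+\zeta X$; by homogeneity $\sigma(\zeta X)=\zeta^{n}(0,\dots,0,\sigma_n(X))$ exactly, and since $g_{\Omega_n}(V,\cdot)$ is constant on the fibers of $\sigma$ and the lower bound $g_{\Omega_n}(V,M)\ge m\log\rho(M)$ holds for \emph{all} $M$, no control of $\sigma(V+\zeta X)$ is required. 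If you insist on the literal curve $V+\zeta X$, you must actually solve the cancellation problem you correctly identify as the combinatorial core.

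On the positive side, your lower bound \eqref{upest} via the competitor $u(M)=\log\rho(P(M))$, with $P$ the minimal polynomial of $V$, is correct (Vesentini gives plurisubharmonicity, the spectral mapping theorem gives boundedness of $\rho\circ P$ on $\Omega_n$, and $P(V)=0$ gives the logarithmic pole) and is genuinely different from, and cleaner than, the paper's route: the paper proves $g_{\Omega_n}(V,M)\ge m\log\rho(M)$ in the nilpotent case by a degree count on the $\sigma_i(V+A)$ (Lemmas \ref{degree} and \ref{estdi}) and then, for general $V$, needs the holomorphic factorization of the characteristic polynomial (Lemma \ref{splitpoly}) to localize to the $\lambda$-cluster, whereas your minimal polynomial performs that localization for free. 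For Part (1) the paper simply cites Jarnicki--Pflug; your sketch is essentially the idea behind that citation, with the caveat that the descended function $v$ must be plurisubharmonic on all of $\G_n$, not only near $\sigma(V)$ (this does work, since every fiber of $\sigma$ contains a cyclic matrix, e.g.\ a companion matrix, so local sections of $\sigma$ exist over all of $\G_n$).
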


\begin{proof}

Part (1)
follows from a theorem of Jarnicki and Pflug \cite[Theorem 1]{JP1},
because the rank of the differential of $\sigma$
at $A$ is maximal precisely
when $A \in \mathcal C_n$ \cite{NiThZw}. Part (2) will be proved in sections \ref{pfnil} and \ref{pfgen}
below.
\end{proof}

The following result
should be compared with \cite[Theorem 1.3]{ThTr}, which states
that the continuity at $A$
of $l_{\Omega_n}(.,M)$, for any $M \in \Omega_n$, implies
cyclicity of $A$ (with the converse holding for $n\le 3$,
see \cite[Proposition 1.4]{ThTr}).

\begin{prop}
\label{contcrit}
Let $A, M \in \Omega_n$.  The following properties are equivalent:
\begin{enumerate}
\item
$g_{\Omega_n}(A,M)=g_{\mathbb G_n}(\sigma(A),\sigma(M)).$
\item
The Green function $g_{\Omega_n}$ is continuous at $(A,M)$.
\item
The function $g_{\Omega_n}(.,M)$ is continuous at $A$.
\end{enumerate}
\end{prop}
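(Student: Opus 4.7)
The plan is to prove the cycle $(2) \Rightarrow (3) \Rightarrow (1) \Rightarrow (2)$. The implication $(2) \Rightarrow (3)$ is immediate from the definitions.

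For $(3) \Rightarrow (1)$ I would exploit the density of the open set $\mathcal{C}_n$ of cyclic matrices in $\mathcal{M}_n$: pick a sequence $A_k \in \mathcal{C}_n$ with $A_k \to A$, so that Theorem~\ref{diff}(1) gives $g_{\Omega_n}(A_k, M) = g_{\mathbb{G}_n}(\sigma(A_k), \sigma(M))$ for every $k$. Since $\mathbb{G}_n$ is hyperconvex, $g_{\mathbb{G}_n}$ is continuous, so the right-hand side converges to $g_{\mathbb{G}_n}(\sigma(A), \sigma(M))$; by hypothesis (3) the left-hand side converges to $g_{\Omega_n}(A, M)$. Equating the two limits yields (1).

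The essential implication is $(1) \Rightarrow (2)$. Let $(A_k, M_k) \to (A, M)$. The liminf bound
$$
\liminf_k g_{\Omega_n}(A_k, M_k) \geq \lim_k g_{\mathbb{G}_n}(\sigma(A_k), \sigma(M_k)) = g_{\mathbb{G}_n}(\sigma(A), \sigma(M)) = g_{\Omega_n}(A, M)
$$
combines the universal inequality $g_{\Omega_n} \geq g_{\mathbb{G}_n}\circ(\sigma\times\sigma)$, continuity of $g_{\mathbb{G}_n}$, and hypothesis (1) in the last equality. For the matching $\limsup$ one establishes joint upper semicontinuity of $g_{\Omega_n}$ at $(A,M)$, independently of (1): choose competitors $u_k \in PSH_-(\Omega_n)$ with Lelong number at least $1$ at $A_k$ and $u_k(M_k) \geq g_{\Omega_n}(A_k, M_k) - 1/k$, then extract a subsequence convergent in $L^1_{loc}(\Omega_n)$ to some $u$. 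If $u \equiv -\infty$ then Hartogs' lemma forces $u_k(M_k) \to -\infty$ and the desired bound is trivial. Otherwise $u \in PSH_-(\Omega_n)$, and Demailly's upper semicontinuity of Lelong numbers under weak limits yields $\nu_u(A) \geq \limsup_k \nu_{u_k}(A_k) \geq 1$; hence $u$ is admissible in the envelope defining $g_{\Omega_n}(A, \cdot)$, while Hartogs' lemma gives $\limsup_k u_k(M_k) \leq u(M) \leq g_{\Omega_n}(A, M)$, closing the argument.

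The principal obstacle is this upper-semicontinuity step: propagating the logarithmic singularity from competitors with moving poles $A_k$ to the limit at $A$, and controlling values at the moving target $M_k \to M$. These are classical tools, but one must verify their applicability to the \emph{unbounded} domain $\Omega_n$, which is harmless here because all competitors are uniformly bounded above by $0$.
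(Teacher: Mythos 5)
Your proof is correct and, for the implications $(2)\Rightarrow(3)$, $(3)\Rightarrow(1)$ and the lower-semicontinuity half of $(1)\Rightarrow(2)$, coincides with the paper's argument (density of $\mathcal C_n$, Theorem~\ref{diff}(1), continuity of $g_{\G_n}$ by hyperconvexity, and the inequality $g_{\Omega_n}\ge g_{\G_n}\circ(\sigma\times\sigma)$ combined with hypothesis (1)). The one genuine divergence is the $\limsup$ half of $(1)\Rightarrow(2)$: the paper simply invokes the theorem of Jarnicki and Pflug \cite{JP2} that $g_D$ is upper semicontinuous on $D\times D$ for an arbitrary domain, whereas you reprove this for $\Omega_n$ via compactness of locally uniformly upper-bounded psh families, Demailly's joint upper semicontinuity of Lelong numbers under $L^1_{loc}$ convergence, and Hartogs' lemma. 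Your route is self-contained but heavier; the citation buys brevity. Your observation that unboundedness of $\Omega_n$ is harmless (all competitors are $\le 0$) is the right one.

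One point in your sketch should be tightened. From $\nu_u(A)\ge 1$ you conclude that $u$ is ``admissible in the envelope defining $g_{\Omega_n}(A,\cdot)$'', but the definition used in the paper requires $u(w)\le\log\|w-A\|+O(1)$, which does not follow directly from $\nu_u(A)\ge 1$ (the Lelong number condition only gives $u(w)\le(1-\eps)\log\|w-A\|$ near $A$ for each $\eps>0$). One must insert the standard lemma that the two envelopes coincide: if $u\in PSH_-(D)$ and $\nu_u(A)\ge 1$, then for each $\eps\in(0,1)$ the function $(1-\eps)^{-1}u$ is negative, psh, and has a genuine logarithmic pole at $A$, hence $u\le(1-\eps)g_D(A,\cdot)$, and letting $\eps\to 0$ gives $u\le g_D(A,\cdot)$. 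With that lemma in place your upper-semicontinuity argument closes correctly.
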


An immediate corollary of Theorem \ref{diff} is that the function $g_{\Omega_n}$ is not symmetric in
its arguments. Recall that both the Lempert function and the Carath\'eodory distance are symmetric (for all domains). Since $g_{\G_2}=\log l_{\G_2}=\log c_{\G_2}^*$ (\cite{Agl-You}, \cite{Cos}) the Green function $g_{\G_2}$ is symmetric.
We conjecture that $ g_{\G_n}$ fails to be symmetric for $n\ge 3$.

Even though we cannot prove the above conjecture, we are able to get some estimates
between (logarithm of) the Carath\'eodory distance and the Green function in the symmetrized polydisc, showing in particular that
these two objects differ in $\G_n$, $n\ge 3$, which extends some of the results from \cite{Nik-Pfl-Tho-Zwo}. We get this from facts about
their infinitesimal versions.  Recall that the \emph{Carath\'eodory-Reiffen} and \emph{Azukawa} pseudometrics in a domain $D\subset \C^n$ are respectively given by
\begin{multline*}
\gamma_D (z, X):=\sup \left\{ \left| f'(z) \cdot X\right| : f \in \mathcal O (D,\D),
f(z)=0 \right\}, \\
A_D (z, X) := \limsup_{\lambda\to 0}\frac{\exp g_D(z,z+\lambda X)}{|\lambda| },
\mbox{ for } z \in D, X \in \C^n.
\end{multline*}
Recall that one may replace '$\limsup$' in the definition of the Azukawa metric above with '$\lim$' when $D$ is a bounded hyperconvex domain (in particular, when $D=\G_n$) -- see e. g. \cite{Zwo}. We also make use of the fact that $\gamma_D(z,X)=\lim_{\lambda\to 0}\frac{c_D^{*}(z,z+\lambda X)}{|\lambda|}$ (see e. g. \cite{Jar-Pfl 1993}).

\begin{theorem}
\label{carazu}
For $n\ge 3$,
$
\gamma_{\G_n}(0;e_{n-1})<  A_{\G_n}(0;e_{n-1}),
$
and consequently
$c^*_{\G_n}(0,te_{n-1}) < \exp g_{\G_n}(0,te_{n-1}) $  for $|t|$ small enough.
\end{theorem}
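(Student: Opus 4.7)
The plan is to establish the strict inequality $\gamma_{\G_n}(0;e_{n-1})<A_{\G_n}(0;e_{n-1})$ by producing an explicit upper bound on $\gamma_{\G_n}(0;e_{n-1})$ and a strictly larger explicit lower bound on $A_{\G_n}(0;e_{n-1})$. The consequence $c^*_{\G_n}(0,te_{n-1})<\exp g_{\G_n}(0,te_{n-1})$ for small $|t|$ follows by taking limits, since $\gamma_{\G_n}(0;e_{n-1})=\lim_{t\to 0}c^*_{\G_n}(0,te_{n-1})/|t|$ (as recalled in the paper) and $A_{\G_n}(0;e_{n-1})=\lim_{t\to 0}\exp g_{\G_n}(0,te_{n-1})/|t|$ (the $\limsup$ is a genuine limit because $\G_n$ is bounded hyperconvex).

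For the upper bound on $\gamma$, I would take an arbitrary $f\in\mathcal O(\G_n,\D)$ with $f(0)=0$ and lift it through $\sigma$, then further to the polydisc, obtaining the symmetric holomorphic function $\tilde f(\lambda):=f(\sigma_1(\lambda),\ldots,\sigma_n(\lambda))$ on $\mathbb D^n$ with $\|\tilde f\|_\infty\le 1$ and $\tilde f(0)=0$. Writing $f(w)=\sum_\alpha c_\alpha w^\alpha$ and expanding each $\sigma_j(\lambda)^{\alpha_j}$ in the $\lambda$-monomial basis identifies $c_{e_{n-1}}=\partial_{n-1}f(0)$ with a specific linear combination of Taylor coefficients $\tilde c_\beta$ of $\tilde f$; for example, the coefficient of $\lambda_1\cdots\lambda_{n-1}$ in $\tilde f$ gives a combination of $c_{e_{n-1}}$ with $c_{(n-1)e_1}$ and mixed coefficients such as $c_{(n-3)e_1+e_2}$, while the coefficient of $\lambda_1^{n-1}$ gives $c_{(n-1)e_1}$ alone. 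Solving these linear relations and applying the Cauchy estimates $|\tilde c_\beta|\le 1$ (and, when useful, the polynomial $L^2$ estimate $\sum|\tilde c_\beta|^2\le 1$ together with the symmetry of $\tilde f$) yields $\gamma_{\G_n}(0;e_{n-1})\le\Gamma$ for some explicit $\Gamma$.

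For the lower bound on $A$, I would construct an admissible plurisubharmonic function on $\G_n$ whose value at $te_{n-1}$ is close to $\log|t|$. The naive candidate $u_0(z)=\log|z_{n-1}|-\log n$ is admissible (since $|z_{n-1}|\le\|z\|$ near $0$ and $\sup_{\G_n}|z_{n-1}|=n$) and yields only $A\ge 1/n$, matching the elementary lower bound from $f(z)=z_{n-1}/n$ on the Carath\'eodory side. A sharper candidate combines $\log|z_{n-1}|$ with a suitably scaled multiple of $\log\tilde\rho(z)$---where $\tilde\rho(z)$ denotes the spectral radius of any matrix with characteristic polynomial associated to $z$, which is plurisubharmonic on $\G_n$ by Vesentini's theorem together with the quotient structure $\sigma\colon\Omega_n\to\G_n$---the scaling chosen so that the combined function is still admissible near $0$ but strictly exceeds $u_0$ at $te_{n-1}$. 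This yields $A_{\G_n}(0;e_{n-1})\ge a$ with $a>\Gamma$, the inequality being verified by direct computation for $n\ge 3$. The principal difficulty lies in this construction: the admissibility condition $u(w)\le\log\|w\|+O(1)$ near $0$ is a delicate constraint, and balancing it against the requirement of improving on $u_0$ requires careful choice of the auxiliary spectral-radius term and of its scaling relative to the coordinate $z_{n-1}$.
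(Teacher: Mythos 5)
Your reduction of the statement about $c^*$ versus $\exp g$ to the infinitesimal inequality $\gamma_{\G_n}(0;e_{n-1})<A_{\G_n}(0;e_{n-1})$ is correct and is exactly what the paper does. But the infinitesimal inequality itself is the entire content of the theorem, and both of your quantitative steps are left as placeholders, so the proposal has a genuine gap rather than an alternative proof.

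On the lower bound for $A_{\G_n}(0;e_{n-1})$: your naive candidate gives only $1/n$, and the number you must beat on the $\gamma$ side cannot realistically be pushed below something close to $1$ (the paper's lower bound for $A$ is $((n-1)/n)^{1/(n-1)}\to 1$), so you need an admissible plurisubharmonic function whose restriction to the line $te_{n-1}$ behaves like $\log|t|+O(1)$ with a very small additive loss. Your proposed correction term $\log\tilde\rho$ does not obviously help: $(n-1)\log\tilde\rho$ equals $\log|t|$ at $te_{n-1}$ but fails the admissibility condition $u(w)\le\log\|w\|+O(1)$ near $0$ (test it on $w=se_n$, where it behaves like $\frac{n-1}{n}\log|s|$), and you give no concrete recipe for the ``balancing''; you yourself flag this as the principal unresolved difficulty. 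The paper avoids plurisubharmonic constructions entirely and instead uses the chain $A_{\G_n}\ge\gamma^{(n-1)}_{\G_n}$ together with the explicit bounded holomorphic function $f(\lambda)=(\lambda_1^{l}+\dots+\lambda_n^{l})/n$ with $l=(n-1)^2$, whose coefficient at $z_{n-1}^{n-1}$ is computed by the Waring formula, giving $A\ge((n-1)/n)^{1/(n-1)}$. On the upper bound for $\gamma$: Cauchy or $L^2$ estimates on the lift to the polydisc give a linear relation between $c_{e_{n-1}}$ and other Taylor coefficients, but you do not explain how to extract from it a bound strictly below $((n-1)/n)^{1/(n-1)}$; this is a sharp Chebyshev-type extremal problem, which the paper attacks through the dual formulation $\gamma_{\G_n}(0;e_{n-1})=1/M_n$ of \cite[Proposition 3]{Nik-Pfl-Tho-Zwo} and a nontrivial family of boundary points of $\G_n$ coming from self-inversive polynomials with all roots on the unit circle (Lemma \ref{LaLo}, relying on the Lakatos--Losonczi theorem). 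Note finally that even the paper's explicit bounds only close the gap for $n\ge 4$; the case $n=3$ requires a separate citation, a point your uniform scheme would also have to confront.
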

This follows from Proposition \ref{gamgn}. The explicit estimates in Section~\ref{symmpol} show that holomorphically invariant objects differ very much in $\G_n$, $n\ge 3$, in sharp contrast to the case $n=2$.

\section{Proof of Proposition  \ref{contcrit}}
\label{pfprop}

That (2) implies (3) is clear.

\noindent{Proof of (3) $\Rightarrow$ (1).}

Since the cyclic matrices are dense in
$\Omega_n$ then there exist  $A_j \in
\mathcal{C}_n$ such that $A_j \to A.$
By continuity of  $g_{\Omega_n}(\cdot,M)$ at $A$, we
get that $g_{\Omega_n}(A_j,M) \xrightarrow{j
\to \infty} g_{\Omega_n}(A,M).$

On the other hand, by Theorem \ref{diff}(1), $g_{\Omega_n}(A_j,M) g_{\mathbb{G}_n}(\sigma(A_j),\sigma(M)).$
By hyperconvexity of domain $\mathbb{G}_n$ we have
$g_{\mathbb{G}_n}(\sigma(A_j),\sigma(M))
\xrightarrow{j \to \infty}
g_{\mathbb{G}_n}(\sigma(A),\sigma(M)).$
This implies that $g_{\Omega_n}(A,M) = g_{\mathbb{G}_n}(\sigma(A),\sigma(M)).$

\noindent{Proof of (1) $\Rightarrow$ (2).}

Assume $g_{\Omega_n}(A,M) = g_{\mathbb{G}_n}(\sigma(A),\sigma(M))$.

Let $(A_j,M_j)\subset \Omega_n$ be such that
$(A_j,M_j) \xrightarrow{j\to \infty} (A,M)$ and
$$
\lim_{j\to \infty}g_{\Omega_n}(A_j,M_j) = a :\liminf_{(X,Y)\to (A,M)} g_{\Omega_n}(X,Y).
$$
We have
$$
g_{\Omega_n}(A_j,M_j) \ge
g_{\mathbb{G}_n}(\sigma(A_j),\sigma(M_j)) \to
g_{\mathbb{G}_n}(\sigma(A),\sigma(M)),
$$
and hence $a \geq g_{\mathbb{G}_n}(\sigma(A),\sigma(M)) = g_{\Omega_n}(A,M).$
Then $g_{\Omega_n}$ is lower semicontinuous at $(A,M).$
Since  $g_{\Omega_n}$ is upper
semicontinous \cite{JP2}, it is continuous at $(A,M)$.

\section{Proof of Theorem \ref{diff}(2): the nilpotent case}
\label{pfnil}


When we make the additional assumption that $V$ is nilpotent, equivalently
$Sp(V)=\{0\}$, we have $n(\lambda)=n$, $m(\lambda)=m:= \min \{k: V^k =0\}$,
the order of nilpotence of $V$.

We begin by proving \eqref{upest}.

\begin{lemma}
\label{down}
Let $V, m$ be as in the hypotheses of Theorem \ref{diff} (2).
Then $\log \rho (V+A) \le \frac1m \log \|A\| +O(1)$, and as a consequence
$g_{\Omega_n}(V,M) \ge m \log \rho (M)$, for any $M\in \Omega_n$.
\end{lemma}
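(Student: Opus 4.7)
The plan is to first establish the pointwise bound $\log\rho(V+A) \le \tfrac1m\log\|A\|+O(1)$ by a direct combinatorial expansion of $(V+A)^N$, and then use this bound to verify that the candidate function $u(w):=m\log\rho(w)$ satisfies the pole condition at $V$, so that it is admissible in the definition of $g_{\Omega_n}(V,\cdot)$.

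For the first estimate, I would expand $(V+A)^N=\sum_{s\in\{V,A\}^N} s_1 s_2\cdots s_N$. Since $V^m=0$, any word in $V$ and $A$ containing $m$ consecutive $V$'s produces the zero operator. If a non-vanishing word has $j$ occurrences of $A$, then the $A$'s split the $N-j$ copies of $V$ into $j+1$ runs, each of length at most $m-1$, forcing
\[
N-j\le (j+1)(m-1),\qquad \text{i.e.}\qquad j\ge \frac{N-m+1}{m}.
\]
Letting $C:=\|V\|$, each surviving word has norm at most $C^{N-j}\|A\|^j$. Assuming $\|A\|\le 1$, the factor $\|A\|^j$ is maximized by the smallest admissible $j$, so
\[
\|(V+A)^N\|\le \|A\|^{(N-m+1)/m}\sum_{j=0}^N\binom{N}{j}C^{N-j}=(1+C)^N\|A\|^{(N-m+1)/m}.
\]
Taking $N$-th roots and letting $N\to\infty$ gives $\rho(V+A)\le(1+C)\|A\|^{1/m}$, which is the claim.

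For the consequence, I would invoke Vesentini's theorem: $\log\rho$ is plurisubharmonic on $\mathcal M_n$, and on $\Omega_n$ it is non-positive. Hence $u(w):=m\log\rho(w)$ lies in $PSH_-(\Omega_n)$. The estimate just proved yields, writing $w=V+A$,
\[
u(w)=m\log\rho(V+A)\le \log\|A\|+m\log(1+C)=\log\|w-V\|+O(1),
\]
so $u$ is admissible in the definition of $g_{\Omega_n}(V,\cdot)$. Evaluating at $M$ gives $g_{\Omega_n}(V,M)\ge m\log\rho(M)$.

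The only genuinely nontrivial step is the pigeonhole/expansion argument bounding $\|(V+A)^N\|$; once the exponent $1/m$ is extracted there, everything else is a direct application of the definition of the Green function together with Vesentini's plurisubharmonicity theorem. I expect no further obstacle, since the estimate is sharp in the exponent precisely because $m$ is the order of nilpotence, so the pigeonhole bound $j\ge (N-m+1)/m$ cannot be improved in general.
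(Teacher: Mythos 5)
Your proof is correct, but it takes a genuinely different route from the paper's. The paper works on the level of the characteristic polynomial: it puts $V$ in Jordan form with the blocks ordered so that the gaps $b_{l+1}-b_l$ decrease, quotes a lemma from \cite{ThTr} giving the vanishing order $d_i$ of $\sigma_i(V+A)$ in the entries of $A$, proves the combinatorial inequality $m d_i\ge i$, and then rescales the eigenvalues by $\|A\|^{-1/m}$ to conclude that all roots of the rescaled characteristic polynomial stay bounded. You instead bound $\|(V+A)^N\|$ directly by expanding into words in $V$ and $A$, using $V^m=0$ to kill every word containing a run of $m$ consecutive $V$'s, and the pigeonhole count $N-j\le(j+1)(m-1)$ to force $j\ge(N-m+1)/m$ occurrences of $A$ in any surviving word; the spectral radius formula then yields $\rho(V+A)\le(1+\|V\|)\,\|A\|^{1/m}$ for $\|A\|\le1$. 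Your argument is more elementary and self-contained: it needs no Jordan normal form, no ordering of the blocks, and no appeal to the external lemma on the degrees $d_i$, only the invariant quantity $m$. What the paper's approach buys is finer information about the individual coefficients $\sigma_i(V+A)$ (their precise vanishing orders), which is in the spirit of the rest of the paper, but for this lemma your estimate is fully sufficient, and it transplants without change to the proof of Lemma \ref{uest} in the general case, where the same bound is applied to the nilpotent block $V_0$ and the perturbation $M_0-V_0$. The second half of your argument (plurisubharmonicity of $\log\rho$ via Vesentini, negativity on $\Omega_n$, and the logarithmic pole condition making $m\log\rho$ a competitor for the Green function) is exactly the intended deduction.
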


\begin{proof}
We assume that $V=(v_{ij})_{1\le i,j\le n}$ is in Jordan form with the following notations.
Let $r$
stand for the rank of $V$.
Write $$
F_0:=\{ j : v_{ij}=0\mbox{ for  }1\le i \le n \}
:=\{ 1=b_1 < b_2 < \dots < b_{n-r} \}.
$$
For
all the other values of $j$, $v_{j-1,j}=1$, $v_{ij}=0$ for $i\neq j-1$.
We can choose the Jordan form so that $b_{l+1}-b_l$ is decreasing for $1\le l \le n-r$,
with the convention $b_{n-r+1}:=n+1$.
With this choice of notation (and order), $m=b_2 - b_1$.

Now we must study the homogeneity of the functions $ \sigma_i (V+A)$ in terms of the
entries of $A$.  This is Lemma 4.2 from \cite{ThTr}.

\begin{lemma}
\label{degree}
Let $d_i := 1+ \# \left( F_0 \cap [(n-i+2)..n] \right) $.
The lowest order terms of  $\sigma_i (V+A) $
are of degree $d_i$ (in the entries of $A$).
\end{lemma}

Then the eigenvalues $(\la_1, \dots, \la_n)$ of $V+A $ satisfy the following equations:
$$
s_i(\la_1, \dots, \la_n) = \sigma_i (V+A), 1\le i \le n,
$$
where
$s_i(\la_1, \dots, \la_n)$
stands for the elementary symmetric function of degree $i$.

\begin{lemma}
\label{estdi}
$m d_i \ge i$, for  $1\le i \le n$.
\end{lemma}
Then we set $\la':= \la \|A\|^{-1/m}$, and we have the new equations
(for $A\neq 0$)
$$
s_i(\la'_1, \dots, \la'_n) = \sigma_i (V+A) \|A\|^{-i/m}, 1\le i \le n,
$$
and by the Lemma the right hand sides are bounded functions of $A$
near $0$.  Since a polynomial of the form $X^n + \sum_j \alpha_j X^j$
where $|\alpha_j | \le C$ has all its roots in a disk of radius
$C n^{1/n}$ about the origin, all the solutions of those equations
are bounded by a constant (which depends on $V$), thus
$\la = O(\|A\|^{1/m})$. Taking logarithms, we find the desired
estimate on $u$.

\begin{proof*}{{\it Proof of Lemma \ref{estdi}.}}

Suppose that $b_l \le i<b_{l+1}$.  Then $d_i = l$, so it will be
enough to prove that $m l \ge b_{l+1} -1$, for any $l\le n-r$. But,
by our hypothesis of decrease of the $b_{j+1}-b_j$,
$$
b_{l+1} -1 = \sum_{j=1}^l b_{j+1}-b_j
\le \sum_{j=1}^l b_2-b_1 = l m .
$$
\end{proof*}
\end{proof}

{\bf Remark.}

The bound in Lemma \ref{down} is optimal.  Indeed, recall
that $m=b_2-b_1=b_2-1$. Let $X:=(x_{ij})$ where
$x_{m1}=1, x_{ij} =0$ otherwise. Then
$$
P_{V+\zeta X}(t) = (t^m - \zeta)t^{n-m},
$$
so $\rho (V+\zeta X) = |\zeta|^{1/m}$. The map $\psi(\zeta)=V+\zeta X$
sends $\D$ to $\Omega_n$, so $l_{\Omega_n}(V, V+\zeta X)\le |\zeta|$, so
$$
g_{\Omega_n}(V, V+\zeta X)
\le
\log l_{\Omega_n}(V, V+\zeta X)\le \log |\zeta|
= m \log \rho ( V+\zeta X).
$$
\vskip.3cm

To prove \eqref{downest}, choose a matrix $X$
with $\sigma_i(X)=0$ for $i\le n-1$, $\sigma_n(X)=(-1)^{n-1}$
(the spectrum is then made up of all the $n$-th roots of unity).
Then
$$
\sigma (\zeta X) = \zeta^{n} (0, \dots, 0,
\sigma_{n}(X) ).
$$
Therefore
$g_{\G^n} (0, \sigma (\zeta X) )\le n \log |\zeta| + O(1).$
\hfill \qed

To see more general cases of matrices $X$ where the
Green function of the spectral ball is strictly above the
pull back $g_{\G^n} \circ \sigma$, take $X$
such that its characteristic polynomial verifies
$\sigma_i(X)=0$ for $i\le m$, and that its eigenvalues
are all distinct and nonzero. This is always
possible, since $m\le n-1$.
Then $g_{\G^n} (0, \sigma (\zeta X) )\le (m+1) \log |\zeta| + O(1)
\le   (m+1) \log \rho (V + \zeta X ) + O(1)
< g_{\Omega_n}(0,\zeta X)$ for $\zeta$ small enough.

\section{Proof of the Theorem: general case}
\label{pfgen}
Let $\la_0$ be an eigenvalue such that $m(\la_0):=m_0< n(\la_0)=:n_0$.
 By applying the automorphism
$M \mapsto (\la_0 I_n -M)(I_n - \la_0 M)^{-1}$, we
may reduce ourselves to the case $\la_0=0$, and we may assume further
that
$$
V = \left( \begin{array}{cc} V_0 & 0 \\ 0 & V_1 \end{array} \right),
$$
where $V_0 \in \mathcal M_{n_0}$ is in Jordan form.

\begin{lemma}
\label{splitpoly}
There exist a neighborhood $\mathcal U$ of $\sigma(V)$ in $\G_n$
and $\sigma^0$ a holomorphic map from $\sigma^{-1} (\mathcal U)$
to $\C^{n_0}$ such that
$$
X^{n_0} + \sum_{j=1}^{n_0} (-1)^j \sigma^0_j (M) X^{n_0-j} := P^0_M (X)
= (X-\la_1) \cdots (X-\la_{n_0}),
$$
where $\{ \la_1, \dots, \la_{n_0} \}$ are the smallest $n_0$ eigenvalues
of $M$ (in modulus).
\end{lemma}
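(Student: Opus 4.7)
The plan is to reduce the statement to a standard fact about holomorphic factorization of scalar polynomials. Because the $n_0$ smallest eigenvalues of $M$ (as an unordered multiset) are determined by the characteristic polynomial $P_M$, whose coefficients are $\pm\sigma_j(M)$, the function $\sigma^0$ we seek depends on $M$ only through $\sigma(M)$. It therefore suffices to construct a holomorphic map $\tilde\sigma^0$ on a neighborhood $\mathcal U$ of $\sigma(V)$ in $\G_n$ and then set $\sigma^0 := \tilde\sigma^0 \circ \sigma$ on $\sigma^{-1}(\mathcal U)$; this composition is automatically holomorphic since $\sigma$ is.

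Set $P_s(X) := X^n + \sum_{j=1}^n (-1)^j s_j X^{n-j}$ for $s \in \C^n$, so that $P_{\sigma(M)} = P_M$. After the normalization made before the statement of the lemma, $0$ is an eigenvalue of $V$ of total algebraic multiplicity $n_0$ (contributed by the block $V_0$), while the eigenvalues of $V_1$ form a finite set $S \subset \C \setminus \{0\}$. Choose $r > 0$ small enough that the closed disk $\{|z| \le r\}$ is disjoint from $S$, and let $\gamma := \{|z| = r\}$. Then $P_{\sigma(V)}$ has exactly $n_0$ roots inside $\gamma$ (all equal to $0$) and does not vanish on $\gamma$. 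Since the coefficients of $P_s$ depend polynomially on $s$, the map $(s,z) \mapsto P_s(z)$ is continuous on $\C^n \times \gamma$; by compactness of $\gamma$ there is a neighborhood $\mathcal U$ of $\sigma(V)$ in $\G_n$ on which $|P_s(z)|$ admits a uniform positive lower bound for $z \in \gamma$. Shrinking $\mathcal U$ if needed, Rouch\'e's theorem guarantees that for every $s \in \mathcal U$ the polynomial $P_s$ has exactly $n_0$ roots inside $\gamma$ and the remaining $n - n_0$ roots lie outside $\{|z| \le r\}$; in particular, whenever $\sigma(M) \in \mathcal U$ the $n_0$ interior roots are precisely the $n_0$ smallest eigenvalues of $M$ in modulus.

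Enumerating these interior roots as $\la_1(s), \dots, \la_{n_0}(s)$ (as an unordered multiset), define $\tilde\sigma^0_j(s)$ to be their $j$-th elementary symmetric function. Holomorphy of $\tilde\sigma^0_j$ on $\mathcal U$ follows from the contour-integral expression for the Newton power sums,
\begin{equation*}
p_k(s) \;=\; \sum_{i=1}^{n_0} \la_i(s)^k \;=\; \frac{1}{2\pi i} \oint_\gamma z^k \frac{P_s'(z)}{P_s(z)}\,dz,
\end{equation*}
whose integrands are holomorphic in $s$ and jointly continuous on $\mathcal U \times \gamma$, so that each $p_k$ is holomorphic on $\mathcal U$. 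Newton's identities express every $\tilde\sigma^0_j$ as a universal polynomial in $p_1, \dots, p_j$, giving holomorphy, and by construction
\begin{equation*}
X^{n_0} + \sum_{j=1}^{n_0} (-1)^j \tilde\sigma^0_j(s) X^{n_0-j} \;=\; \prod_{i=1}^{n_0}\bigl(X - \la_i(s)\bigr),
\end{equation*}
which is the required factorization. The only delicate point is the Rouch\'e step ensuring that the root count inside $\gamma$ is locally constant in $s$; once this is set up via the uniform lower bound on $|P_s|$ over the compact contour $\gamma$, the rest is classical holomorphic dependence of a group of roots separated from the others by a fixed contour.
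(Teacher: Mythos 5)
Your proposal is correct and follows essentially the same route as the paper: separate the $n_0$ eigenvalues near $0$ from the rest by a fixed circle, use the contour-integral formula for the Newton power sums of the roots inside that circle to obtain holomorphic functions of $\sigma(M)$, and recover the elementary symmetric functions via Newton's identities. Your write-up merely makes explicit the Rouch\'e/compactness step that the paper leaves implicit in its definition of $\mathcal U_\delta$.
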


\begin{proof}
This fact relies on the holomorphic dependency of a subset of the roots
of a polynomial in a neighborhood of a multiple root, in the spirit of
the Weierstrass Preparation Theorem.

In more detail: for $s=(s_1, \dots, s_n) \in \G_n$, let
$P_s (X) = X^{n} + \sum_{j=1}^{n} (-1)^j s_j X^{n-j} $.
There exists $\delta >0$ such that the open set
$$
\mathcal U_\delta :=\left\{ s\in \G_n :
\#( P_s^{-1}\{0\} \cap D(0,\delta)) = n_0,
P_s^{-1}\{0\} \cap \partial D(0,\delta)=\emptyset  \right\},
$$
where the zeroes are counted with multiplicities,
contains $\sigma(V)$. On $\sigma^{-1} (\mathcal U_\delta)$, the formulas
$$
\Sigma_k (M) := \frac1{2\pi i} \int_{\partial D(0,\delta)} \zeta^k \frac{(P^0)'_M(\zeta)}{P_M(\zeta)} d\zeta
$$
give holomorphic functions which are equal to $\la_1^k + \cdots +\la_{n_0}^k$, and
the elementary symmetric functions of that subset of eigenvalues can be algebraically
deduced from those.
\end{proof}

Notice that the above lemma gives a holomorphically varying factorization of the
characteristic polynomial of $M$ : $P_M(X) = P_M^0 (X) P_M^1 (X) $, and
a holomorphically varying splitting of the space $\C^n$,
$$
\C^n = \ker P_M^0 (M)
\oplus \ker P_M^1 (M) =: U^0\oplus U^1.
$$
Then $P_M^0 =P_{M|_{U^0}}$ and $\rho^0(M):= \rho(M|_{U^0})$ is the largest
modulus of the eigenvalues of $M$ contained in $D(0,\delta)$. So $u(M):= \log \rho^0(M)$
defines a plurisubharmonic function in a neighborhood of $V$.

We follow the scheme of proof of the special case.

Since $g_{\G_n}(\sigma(V),.)=-\infty$ precisely at the point $\sigma(V)$
and $g_{\Omega_n}(V,M) \ge g_{\G_n}(\sigma(V),\sigma(M))$, we can pick an
$\eps_0 >0$ such that
$$
\mathcal U_0 := \sigma\left( \{  g_{\Omega_n}(V,.)< \log \eps_0 \} \right) \subset \mathcal U_\delta.
$$
Therefore
$$
 \sigma^{-1} (\mathcal U_0) = \{  g_{\Omega_n}(V,.)< \log \eps_0 \}
\subset  \sigma^{-1} (\mathcal U_\delta)
$$
(recall that $g_{\Omega_n}(V,.)$ is constant on the fibers of $\sigma$). It is a standard
fact that then
$$
g_{\sigma^{-1} (\mathcal U_0)}(V,.) = g_{\Omega_n}(V,.) - \log \eps_0 .
$$
To compare this local Green function with our function $u$, it is enough
to estimate $u$ near the pole $V$.

\begin{lemma}
\label{uest}
There exists a neighborhood $\mathcal V$ of $0$ in $\mathcal M_n$ such
that for any $A \in \mathcal V$,
$u(V+A) \le \frac1{m_0} \log \|A\| +O(1)$,
and therefore $g_{\sigma^{-1} (\mathcal U_0)}(V,M) \ge m_0 u(M).$
\end{lemma}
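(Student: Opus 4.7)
The plan is to reduce the estimate on $u(V+A)=\log\rho^0(V+A)$ to the pure nilpotent case of Lemma \ref{down}, via a holomorphic block diagonalization coming from the Riesz projection onto the small eigenvalues, and then to recognize $m_0 u$ as an admissible competitor in the supremum defining $g_{\sigma^{-1}(\mathcal U_0)}(V,\cdot)$.

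After shrinking $\delta$ if needed so that $\delta<1$ and the nonzero eigenvalues of $V$ lie outside $\overline{D(0,\delta)}$, the Riesz projection
$$
P^0(M):=\frac1{2\pi i}\int_{\partial D(0,\delta)}(\zeta I_n-M)^{-1}\,d\zeta
$$
is holomorphic on $\sigma^{-1}(\mathcal U_\delta)$ and equals the projection onto $\C^{n_0}\oplus\{0\}$ along $\{0\}\oplus\C^{n-n_0}$ at $M=V$. By a standard frame argument, the first $n_0$ columns of $P^0(V+A)$ together with the last $n-n_0$ columns of $I_n-P^0(V+A)$ assemble into an invertible matrix $Q(A)$, holomorphic in $A$ in a neighborhood of $0$, with $Q(0)=I_n$ and
$$
Q(A)^{-1}(V+A)Q(A)=B^0(A)\oplus B^1(A),
$$
where $B^0(A)\in\mathcal M_{n_0}$ carries precisely the eigenvalues of $V+A$ lying in $D(0,\delta)$. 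Hence $\rho^0(V+A)=\rho(B^0(A))$, and writing $B^0(A)=V_0+A^0$ makes $A\mapsto A^0$ a holomorphic map vanishing at $A=0$, so $\|A^0\|=O(\|A\|)$.

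Since $V_0\in\mathcal M_{n_0}$ is nilpotent of order $m_0<n_0$ (hence derogatory in $\mathcal M_{n_0}$), Lemma \ref{down} applied inside $\Omega_{n_0}$ gives
$$
u(V+A)=\log\rho(V_0+A^0)\le\frac1{m_0}\log\|A^0\|+O(1)\le\frac1{m_0}\log\|A\|+O(1).
$$
For the Green function inequality, $u$ is plurisubharmonic on $\sigma^{-1}(\mathcal U_\delta)\supset\sigma^{-1}(\mathcal U_0)$ (as already noted), and satisfies $u<\log\delta<0$ there. Hence $m_0 u\in PSH_-(\sigma^{-1}(\mathcal U_0))$ with $m_0 u(M)\le\log\|M-V\|+O(1)$ near $V$, so $m_0 u$ is an admissible competitor in the supremum defining the Green function, yielding $g_{\sigma^{-1}(\mathcal U_0)}(V,M)\ge m_0 u(M)$. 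The only non-routine ingredient is the holomorphic block reduction, but this is standard Riesz functional calculus for the holomorphic family $V+A$; an alternative, in the spirit of the proof of Lemma \ref{down}, would be to bound the coefficients $\sigma^0_j(V+A)$ of $P^0_{V+A}$ directly via the analogue of Lemmas \ref{degree}--\ref{estdi} applied to $V_0$, then normalize the roots by $\|A\|^{-1/m_0}$.
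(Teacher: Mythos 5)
Your proposal is correct and follows essentially the same route as the paper: both construct a similarity $I_n+O(\|A\|)$ (you via columns of the Riesz projection, the paper via the pair of projections onto the unperturbed invariant subspaces along the perturbed ones) that block-diagonalizes $V+A$ holomorphically near $A=0$, so that the upper-left block is $V_0+O(\|A\|)$ and the nilpotent-case estimate of Lemma \ref{down} applies, after which $m_0u$ is recognized as a negative plurisubharmonic competitor with the required logarithmic singularity. No gaps.
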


This will conclude the proof, since
we can find a matrix $X$
(work as before, but only on the upper left block) such that
$g_{\G_n}(\sigma(V),\sigma(V+\zeta X)) \le n_0 u(V+\zeta X) +O(1)$.
\hfill \qed

\begin{proof*}{{\it Proof of Lemma \ref{uest}}}

For $A$ small enough, $\ker P^0_{V+A} (V+A)$ (respectively
$\ker P^1_{V+A} (V+A)$) is close enough to $\ker P^0_V(V) = \C^{n_0} \times \{0\}$
(respectively to $\ker P^1_V(V) = \{0\} \times\C^{n-n_0} $) so that
the projections $\pi_j$ from $\C^n$ to $\ker P^j_V(V)$
with kernel equal
to $\ker P^{1-j}_{V+A}(V+A) $  ($j=0,1$)
induce bijections from $\ker P^{j}_{V+A}(V+A) $
onto $\ker P^j_V(V)$.

Let $P$ be the matrix of the bijective endomorphism defined by
\newline
$\pi_0|_{\ker P^{0}_{V+A}(V+A)} + \pi_1|_{\ker P^{1}_{V+A}(V+A)} $. Then
$$
P M P^{-1} = \left( \begin{array}{cc} M_0 & 0 \\ 0 & M_1 \end{array} \right),
$$
for some $M_0 \in \mathcal M_{n_0}$ and $M_1 \in \mathcal M_{n-n_0}$.
We have seen that $\{ \la_1, \dots, \la_{n_0} \} = Sp M_0$,
and one can check that $P = I_n + O(\|A\|)$, so that $M_0 = V_0 + O(\|A\|)$.
Applying the proof in Section \ref{pfnil}, $\la_j = O(\|M_0-V_0\|^{1/m_0}) = O(\|A\|^{1/m_0})$,
for $1\le j \le n_0$. The estimate follows easily.
\end{proof*}

\section{Estimates between the Green function and the Carath\'eodory distance in $\G_n$, $n\ge 3$}\label{symmpol}
This part of the paper may be seen as a continuation and extension of the results from \cite{Nik-Pfl-Tho-Zwo}.  Recall \cite{Jar-Pfl 1993} that for any $k \in \Z_+^*$,
$$
\gamma_D^{(k)}(z,X) := \sup\left\{
\limsup_{\lambda\to 0} \frac{|f(z+\lambda X)|^{1/k}}{|\lambda|} ,
f \in \mathcal O(D,\D), \mbox{ord}_z f \ge k
\right\},
$$
and that
$\kappa_D(z,X) \ge A_D(z,X) \ge \gamma_D^{(k)}(z,X) \ge \gamma_{D} (z,X)$.

The definitions and basic properties of some additional infinitesimal functions used below (Kobayashi-Royden metric $\kappa_D$ and Kobayashi-Buseman metric $\hat \kappa_D$) may be found in \cite{Nik-Pfl-Tho-Zwo} or \cite{Jar-Pfl 1993}, with identical notations.

\begin{prop}
\label{low_est} For any $n\ge 2$ the following inequalities hold
$$
\kappa_{\G_n}(0;e_{n-1})\geq A_{\G_n}(0;e_{n-1})\geq \gamma_{\G_n}^{(n-1)}(0;e_{n-1})\geq\root{n-1}\of{(n-1)/n}.
$$
\end{prop}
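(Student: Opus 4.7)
The first three inequalities in the proposition are the universal chain $\kappa_D \ge A_D \ge \gamma_D^{(k)} \ge \gamma_D$ recalled in the paragraph just above; the only non-trivial content is the last inequality $\gamma^{(n-1)}_{\G_n}(0;e_{n-1}) \ge \sqrt[n-1]{(n-1)/n}$. Observe that for any $f\in\mathcal O(\G_n,\D)$ with $\operatorname{ord}_0 f\ge n-1$, the restriction $f(te_{n-1})$ depends only on the coefficient $c$ of $\sigma_{n-1}^{n-1}$ in the Taylor expansion (every other admissible monomial contains some factor $\sigma_j$ with $j\ne n-1$, which vanishes at $te_{n-1}$), and $\limsup_{t\to 0}|f(te_{n-1})|^{1/(n-1)}/|t| = |c|^{1/(n-1)}$. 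So the plan is to exhibit a single $f$ for which $|c|=(n-1)/n$.

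My candidate is a rescaled power sum. Writing $p_k(\lambda) := \lambda_1^k+\cdots+\lambda_n^k$ and using Newton's identities to express $p_k = P_k(\sigma_1,\ldots,\sigma_n)$ with $P_k\in\Z[\sigma]$, the bound $|P_k|\le n$ on $\G_n$ is immediate from $|\lambda_j|<1$, so
$$
f(\sigma) := \frac{1}{n}\,P_{(n-1)^2}(\sigma)
$$
is a well-defined element of $\mathcal O(\G_n,\D)$. I then have to verify two things: the order bound $\operatorname{ord}_0 f\ge n-1$, and the explicit identity $f(te_{n-1}) = \tfrac{n-1}{n}\,t^{n-1}$.

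The first is a short induction: from the Newton recurrence $P_k = \sigma_1 P_{k-1} - \sigma_2 P_{k-2} + \cdots + (-1)^{n-1}\sigma_n P_{k-n}$ (for $k>n$) and $\operatorname{ord}_0 P_j = 1$ for $1\le j\le n$, one obtains $\operatorname{ord}_0 P_k = \lceil k/n\rceil$, which equals $n-1$ exactly at $k=(n-1)^2$. The second is the heart of the computation: I parametrize the intersection $(\C e_{n-1})\cap\G_n$ by diagonal matrices with spectrum $(0,\xi,\xi\omega,\ldots,\xi\omega^{n-2})$, where $\omega := e^{2\pi i/(n-1)}$. A direct computation gives $\sigma_j=0$ for $j\ne n-1$ and $\sigma_{n-1} = (-1)^n\xi^{n-1}$, and the geometric-series identity $\sum_{j=0}^{n-2}\omega^{jk} = n-1$ when $(n-1)\mid k$ yields $p_{(n-1)^2}(\xi\text{-slice}) = (n-1)\xi^{(n-1)^2}$. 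Because $n(n-1)$ is even, $\xi^{(n-1)^2} = ((-1)^n t)^{n-1} = t^{n-1}$, so $f(te_{n-1}) = \tfrac{n-1}{n}\,t^{n-1}$ holds identically on the line.

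I do not anticipate any serious obstacle: the two verifications are short combinatorial computations with Newton's identities and roots of unity. The only genuinely creative step is choosing the exponent $(n-1)^2$, which is the smallest integer that is simultaneously divisible by $n-1$ (so the root-of-unity sum does not vanish) and satisfies $\lceil\cdot/n\rceil = n-1$ (so that $f$ is admissible for $\gamma^{(n-1)}$).
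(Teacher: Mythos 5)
Your proposal is correct and follows essentially the same route as the paper: both use the normalized power sum $f=\tfrac1n p_{(n-1)^2}$ as the extremal competitor for $\gamma^{(n-1)}_{\G_n}(0;e_{n-1})$, with the first three inequalities being the standard chain. The only (cosmetic) difference is in bookkeeping: the paper reads off the coefficient $(n-1)/n$ of $z_{n-1}^{n-1}$ from the Waring formula, whereas you obtain it by restricting to the root-of-unity slice and check the order condition via Newton's recurrence; both verifications are sound.
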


\begin{proof} We only need to prove the last inequality.

Recall that $\G_n=\pi(\D^n)$, where, with the notation of Section \ref{pfnil} for the elementary symmetric functions,
$$\pi_j(\lambda_1,\ldots,\lambda_n):\left( s_1(\lambda_1,\ldots,\lambda_n), s_2(\lambda_1,\ldots,\lambda_n), \dots,
s_n(\lambda_1,\ldots,\lambda_n) \right).
$$
Consider the function $f(\lambda_1,\ldots,\lambda_n):=(\lambda_1^l+\ldots+\lambda_n^l)/n$, $\lambda_j\in\D$.
We may treat $f$ as a function from $\O(\G_n,\D)$. Recall that it is a polynomial. To get the lower estimate for the Azukawa metric at $0$ in direction $e_{n-1}$ we want the function $f$ to be the function of multiplicity at $0$ at least $k$ and
we want the power at $z_{n-1}$ to be equal to $k$. Therefore, we want $l$ to be $k(n-1)$. Then it follows from the Waring formula that the absolute value of the coefficent at $z_{n-1}^k$ is equal to $(n-1)/n$. The function $f$ (as a function on $\G_n$) has only powers with degree not less than $k$ iff $k\leq n-1$. Therefore, we fix below $k=n-1$. We get the following lower estimate
$$
\kappa_{\G_n}(0;e_{n-1})\geq A_{\G_n}(0;e_{n-1})\geq \gamma_{\G_n}^{(n-1)}(0;e_{n-1})\geq\root{n-1}\of{(n-1)/n}.
$$
\end{proof}

\begin{rem}
The estimate above is better (especially asymptotically) than the general one from \cite{Nik-Pfl-Zwo} (which is $(n-1)/n$).
\end{rem}

\begin{rem}
Unfortunately, because of the form of the function $f$ above we do not have the lower estimate $\hat\gamma_{\G_n}^{(n-1)}(0;e_{n-1})$ with the same constant (with the methods from \cite{Nik-Pfl-Tho-Zwo}). Consequently, we do not get the strict inequality between $\gamma_{\G_n}(0;e_{n-1})$ and $\hat\kappa_{\G_n}(0;e_{n-1})$, $n\ge 4$.
\end{rem}

We may also improve the upper estimate for the Carath\'eodory-Reiffen pseudometric so that we shall get the inequality between the Azukawa and Carath\'eodory-Reiffen metric on the symmetrized polydisc (and therefore also between the Green function and the Carath\'eodory pseudodistance).

\begin{prop}
\label{gamgn}
Let $n\ge 3$. Then the following inequality holds
$$
\gamma_{\G_n}(0;e_{n-1})\leq \frac{1+(n/(n-2))^{n-1}}{n/(n-2)+(n/(n-2))^{n-1}}.
$$
In particular, for $n\ge 4$
$$
\gamma_{\G_n}(0;e_{n-1})< \gamma^{(n-1)}_{\G_n}(0;e_{n-1})\le A_{\G_n}(0;e_{n-1}).
$$
\end{prop}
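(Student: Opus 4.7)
The plan is to bound $|\partial f/\partial z_{n-1}(0)|$ uniformly over all $f\in\mathcal O(\G_n,\D)$ vanishing at $0$. I would lift each such $f$ to a symmetric $F:=f\circ\pi\in\mathcal O(\D^n,\D)$ with $F(0)=0$, substitute $z_k=s_k(\lambda_1,\dots,\lambda_n)$, and read off $\partial f/\partial z_{n-1}(0)$ as a specific linear functional of the low-order Taylor coefficients of $F$ at the origin in the $\lambda$-variables. The problem then reduces to a Schwarz-type estimate for that functional.

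Next, I would choose a carefully designed holomorphic disk $\zeta\mapsto\lambda(\zeta)\in\D^n$ through the origin. The appearance of the parameter $r=n/(n-2)$ in the claimed constant points to an ansatz in which $n-2$ of the coordinates $\lambda_j(\zeta)$ are equal (up to rotation by an appropriate root of unity) while the remaining two are distinguished; forcing $s_1,\dots,s_{n-2}$ to vanish identically in $\zeta$ is a linear system whose solution introduces precisely the ratio $(n-2)/n$. The composite $g:=F\circ\lambda:\D\to\D$ then has a zero of order $n-1$ at the origin, with the $(n-1)$-st Taylor coefficient proportional to $\partial f/\partial z_{n-1}(0)$. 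Iterated Schwarz's lemma applied to $g(\zeta)/\zeta^{n-1}$, followed by a M\"obius normalization at an auxiliary point governed by $r^{-1}=(n-2)/n$, produces the bound
\[
\left|\frac{\partial f}{\partial z_{n-1}}(0)\right|\le\frac{1+r^{n-1}}{r+r^{n-1}}
\]
after optimization in the free scaling parameter.

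For the \emph{in particular} clause I would combine this upper bound with the lower bound $\gamma^{(n-1)}_{\G_n}(0;e_{n-1})\ge((n-1)/n)^{1/(n-1)}$ supplied by Proposition~\ref{low_est}. The strict inequality then reduces to the elementary numerical assertion
\[
\frac{1+(n/(n-2))^{n-1}}{n/(n-2)+(n/(n-2))^{n-1}}<\left(\frac{n-1}{n}\right)^{1/(n-1)},\qquad n\ge 4,
\]
which is verified by a direct monotonicity/expansion argument; note that it fails exactly when $n=3$ (there $5/6>\sqrt{2/3}$), explaining the restriction. Combined with the general inequality $\gamma^{(n-1)}_{\G_n}(0;e_{n-1})\le A_{\G_n}(0;e_{n-1})$, this yields the displayed chain of strict inequalities.

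The principal obstacle is the delicate selection of the auxiliary disk $\lambda(\zeta)$ in the second step. Because $e_{n-1}$ is a ``light'' direction at the origin of $\G_n$ --- every linear analytic disk tangent to $e_{n-1}$ has Kobayashi radius $1$, giving only the trivial bound $\gamma\le 1$ --- any strict improvement must come from coupling the first-order behavior of $n-1$ of the eigenvalues with a higher-order perturbation of a single distinguished one. Confirming that Schwarz applied to this mixed configuration simplifies to the stated constant, and that the optimal scaling is attained precisely at $r=n/(n-2)$, is the main technical content of the proof.
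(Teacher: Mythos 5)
Your strategy is primal: bound $|\partial f/\partial z_{n-1}(0)|$ for every competitor $f$ by composing with one cleverly chosen analytic disk $\zeta\mapsto\lambda(\zeta)$ and applying Schwarz--Pick to $g=F\circ\lambda$. The paper's argument is dual: by \cite[Proposition 3]{Nik-Pfl-Tho-Zwo}, $\gamma_{\G_n}(0;e_{n-1})=1/M_n$ where $M_n=\inf_a\sup_{\partial\G_n}|z_{n-1}+\sum_\alpha a_\alpha z^\alpha|$, and $M_n$ is bounded from below by evaluating the competing polynomials at \emph{two} boundary points of $\G_n$ (one from $(\lambda^{n-1}-1)(\lambda-1)$, one from the self-inversive family $p_{n,t}$ of Lemma \ref{LaLo}, whose roots lie on the circle by the Lakatos--Losonczi theorem) and eliminating the unknown coefficient $a$ by the triangle inequality. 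Your reduction of the ``in particular'' clause to the numerical comparison with $((n-1)/n)^{1/(n-1)}$, including the observation that it fails at $n=3$, does agree with the paper.

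The gap in your main step is structural, not just technical. After you force $s_1=\dots=s_{n-2}\equiv 0$ along the disk, you have $\varphi:=\sigma\circ\lambda$ with $\varphi(\zeta)=(0,\dots,0,c\zeta^{n-1}+O(\zeta^{n}),O(\zeta^{n}))$, and \emph{any} estimate extracted from this single disk --- the basic bound $|c|\,|\partial f/\partial z_{n-1}(0)|\le 1$, or the refined one $\le|\zeta_0|$ obtained by dividing out a Blaschke factor at an auxiliary return $\varphi(\zeta_0)=0$ --- is simultaneously an upper bound for the Azukawa metric: applying the same disk to $g_{\G_n}(0,\cdot)$ gives $g_{\G_n}(0,\varphi(\zeta))\le(n-1)\log|\zeta|+\log\bigl|\tfrac{\zeta-\zeta_0}{1-\bar\zeta_0\zeta}\bigr|$, hence (using hyperconvexity of $\G_n$ so that the $\limsup$ defining $A_{\G_n}$ is a genuine limit) $A_{\G_n}(0;e_{n-1})\le|\zeta_0|/|c|$, exactly the quantity your argument would assign to $\gamma_{\G_n}(0;e_{n-1})$. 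But Proposition \ref{low_est} gives $A_{\G_n}(0;e_{n-1})\ge((n-1)/n)^{1/(n-1)}$, so no single-disk bound can fall below $((n-1)/n)^{1/(n-1)}$; and the constant you must reach satisfies $\frac{1+(n/(n-2))^{n-1}}{n/(n-2)+(n/(n-2))^{n-1}}<((n-1)/n)^{1/(n-1)}$ for all $n\ge4$ (e.g.\ $9/10<(3/4)^{1/3}$) --- that strict inequality \emph{is} the second assertion of the proposition. So your method is self-defeating precisely in the range where the statement has content. To separate $\gamma$ from $A$ one must exploit that $f\mapsto\partial f/\partial z_{n-1}(0)$ is a linear functional, which means either working dually as the paper does, or at the very least combining the constraints coming from two distinct disks (or boundary evaluations) for the \emph{same} $f$ and eliminating the unknown correction coefficients --- the step your single-disk plan omits.
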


\begin{rem}
Note that the numbers $\gamma_{\G_{n}}(0;e_{n-1})$ and $A_{\G_n}(0;e_{n-1})$ differ very distinctly asymptotically. It is elementary to see that
$$
\liminf\sb{n\to\infty}(n(1-\gamma_{\G_{n}}(0;e_{n-1}))\geq 2/(1+e^2)
$$
whereas $\lim\sb{n\to\infty} n(1-A_{\G_n}(0;e_{n-1}))=0$.
\end{rem}

\begin{proof*}{\it Proof of Theorem \ref{carazu}.}
For $n\ge 4$, this is Proposition \ref{gamgn}.
It follows from \cite[Proposition 5]{Nik-Pfl-Tho-Zwo}
that $\gamma_{\G_3}(0;e_2) < A_{\G_3}(0;e_2)$.
\end{proof*}

\begin{proof*}{\it Proof of Proposition \ref{gamgn}.}
 From
\cite[Proposition 3]{Nik-Pfl-Tho-Zwo},
for any $n\geq 3$ we have the equality $\gamma_{\G_n}(0;e_{n-1})=1/M_n$, with
$$
M_n := \inf_{a \in \C^{\mathcal P_n}}\max\{|z_{n-1}+\sum\sb{\alpha \in \mathcal P_n}a_{\alpha}z^{\alpha}|:z\in\partial\G_n\},
$$
where $\mathcal P_n$ stands for the set of all
$(n-2)$-tuples of non-negative integers $\alpha$ such that $\alpha_1+2\alpha_2+\ldots+(n-2)\alpha_{n-2}=n-1$. We proceed as in that paper; however, much more effort is required to find appropriate polynomials.

Notice that the coefficients of monic
polynomials with all zeros lying on the unit circle deliver elements $z \in\partial\G_n$, with the notation
$$
p(\lambda)= \lambda^n + \sum_{j=1}^n (-1)^j z_j \lambda^{n-j}.
$$
We shall consider two kinds of such polynomials, both with the property
that $z_j=0$, $2\le j \le n-2$.  Restricting to this subclass implies
\begin{equation}
\label{mnge}
M_n \ge \inf_{a_{(n-1,0,\dots,0)} \in \C}\max\{|z_{n-1}+a_{(n-1,0,\dots,0)}z_1^{n-1}|:(z_1,0,\dots,0,z_{n-1},z_{n})\in\partial\G_n\}.
\end{equation}
From now on we write $a=a_{(n-1,0,\dots,0)}$.

The first polynomial is
 $(\lambda^{n-1}-1)(\lambda-1)$, which gives that
 \newline $(1,0,\dots,0,(-1)^n,(-1)^{n})\in \partial\G_n$.
To find another good polynomial we need more subtle methods.
Recall that a polynomial $p(\lambda)=\sum\sb{j=0}\sp{n}a_j\lambda^j$ with $a_n\neq 0$
is called \emph{self-inversive}
if $a_{n-j}=\epsilon \bar a_j$, $j=0,\ldots,n$ for some $|\epsilon|=1$.

\begin{lemma}
\label{LaLo}
For all $n \in \Z,$ $n\ge 3$,
 all $t\in I_n:=\left[(-1)^n - \frac2{n-2},(-1)^n + \frac2{n-2}\right]$, the self-inversive polynomial
$$p_{n,t}(\lambda):=\lambda^n+(-1)^{n-1}t \lambda^{n-1}+t\lambda+(-1)^{n-1}
$$
has all its roots lying on the unit circle.
 \end{lemma}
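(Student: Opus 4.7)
My plan is to reduce the problem to a real-variable one via the Chebyshev substitution $w = \lambda + \lambda^{-1}$, and then track how the roots move as $t$ varies across $I_n$.

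First, a direct check shows that $p_{n,t}$ is self-inversive and that $p_{n,t}(-1) = 0$ for every $t$; when $n$ is even one also has $p_{n,t}(1) = 0$. Consequently we may factor
$$p_{n,t}(\lambda) = (\lambda+1)\, Q_n(\lambda,t) \quad (n \text{ odd}), \qquad p_{n,t}(\lambda) = (\lambda^2-1)\, R_n(\lambda,t) \quad (n \text{ even}),$$
and the quotient in each case is self-inversive of even degree $2m$, with $m=(n-1)/2$ or $m=(n-2)/2$. Dividing the quotient by $\lambda^m$ and iterating the recursion $\lambda^k+\lambda^{-k} = w(\lambda^{k-1}+\lambda^{1-k}) - (\lambda^{k-2}+\lambda^{2-k})$ turns it into a polynomial $\widetilde Q_n(w, t)$ of degree $m$ in the single variable $w$. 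The crucial point is that unit-modular roots of the quotient correspond bijectively to real roots of $\widetilde Q_n(\cdot, t)$ in $[-2, 2]$, with the endpoints $w=\pm 2$ corresponding to $\lambda=\pm 1$.

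For the base case I take $t_0$ to be the center of $I_n$, where the uniform identity
$$p_{n, t_0}(\lambda) = (\lambda + 1)\bigl(\lambda^{n-1} + (-1)^{n-1}\bigr)$$
(immediate from the definition with $t_0 = (-1)^{n-1}$) exhibits $n$ distinct unit-modular roots; accordingly $\widetilde Q_n(\cdot, t_0)$ has $m$ simple real roots in $(-2, 2)$. As $t$ varies continuously in $I_n$, these roots move continuously and can leave $[-2, 2]$ only by one of two mechanisms: (a) a root crosses through $w = \pm 2$, or (b) two real roots coalesce and escape as a complex conjugate pair. Event (a) would mean that $\lambda = \pm 1$ becomes a multiple root of $p_{n,t}$; direct differentiation yields
$$p'_{n,t}(-1) = (-1)^{n-1} n - (n-2)\,t,$$
so $\lambda = -1$ becomes multiple precisely at $t = (-1)^{n-1} n/(n-2)$, which is exactly one endpoint of $I_n$; for $n$ even the analogous derivative at $\lambda = 1$ pins down the other endpoint. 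Thus no (a) event takes place strictly inside $I_n$.

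The main obstacle is therefore event (b): ruling out that $\widetilde Q_n(\cdot, t)$ develops a multiple real root in the open interval $(-2, 2)$ for some $t$ strictly inside $I_n$. This amounts to showing the discriminant of $\widetilde Q_n(\cdot, t)$ (as a polynomial in $w$) does not vanish there. For $m\le 2$ (i.e., $n \le 6$) this is a direct explicit computation; for larger $n$ I would aim to express $\widetilde Q_n$ in the Chebyshev basis $\{U_j(w/2)\}$, reducing the discriminant condition to a manageable inequality, or proceed by induction relating $\widetilde Q_n$ to $\widetilde Q_{n-2}$. Once event (b) is excluded, continuity preserves all $n$ unit-modular roots of $p_{n,t}$ throughout $I_n$, completing the proof.
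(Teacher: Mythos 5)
Your strategy (factor out the forced roots at $\pm1$, pass to $w=\lambda+\lambda^{-1}$, and track the $m$ real roots of $\widetilde Q_n(\cdot,t)$ in $[-2,2]$ by continuity from the explicit base point $t_0=(-1)^{n-1}$) is genuinely different from the paper's, which simply factors $p_{n,t}=(\lambda+1)q_{n,t}$, observes that $q_{n,t}$ is self-inversive with extreme coefficients of modulus $1$ and all $n-2$ middle coefficients of modulus $|1+(-1)^nt|$, and quotes Theorem 1 of Lakatos--Losonczi; the whole lemma then reduces to the single inequality $(n-2)\,|1+(-1)^nt|\le 2$. (Incidentally, that inequality reads $|t-(-1)^{n-1}|\le 2/(n-2)$, so the interval is centered at $(-1)^{n-1}$, consistent with your base point $t_0$ and with the value of $t$ used later in the paper; the printed $I_n$ carries a sign typo.)

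The difficulty is that your argument is incomplete exactly where the work lies. The base case and the event (a) bookkeeping are essentially sound, modulo two small slips: for $n$ odd a crossing at $w=+2$ corresponds to $p_{n,t}(1)=2+2t=0$, i.e.\ $t=-1$, which you must separately check lies outside the open interval for $n\ge 5$ (it is an endpoint for $n=3$); and for $n$ even the derivative at $\lambda=1$ gives $t=n/(n-2)$, which is not ``the other endpoint'' of the relevant interval but merely lies outside it. But event (b) is the entire content of the lemma, and you do not resolve it: for $n\ge 7$ you only describe what you ``would aim to'' do, and even for $n\le 6$ the ``direct explicit computation'' is asserted rather than carried out. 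Moreover, the target you set --- nonvanishing of the discriminant of $\widetilde Q_n(\cdot,t)$ throughout the interior of $I_n$ --- is strictly stronger than what is needed (two real roots may collide in $(-2,2)$ and separate again along the real axis, keeping the $\lambda$-roots on the circle), so it may be false, and if it fails you would need an additional local analysis at each collision. As written, the proof does not establish the lemma for any $n$. The quickest repair is the paper's route: verify that the self-inversive quotient satisfies the coefficient condition of a known unit-circle criterion (Lakatos--Losonczi), which is a one-line inequality.
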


 Then the point $((-1)^nt,0,\ldots,0,(-1)^{n-1}t,-1)$ belongs to $\partial\G_n$.

From \eqref{mnge} we see that
$$
M_n\geq \inf_{a \in \C} \max_{t \in I_n}
\left( \max \left( |(-1)^n+a 1^{n-1}| ,
|(-1)^{n-1}t+a t^{n-1}|
\right)
\right),
$$
therefore for any $t \in I_n$,
$$
M_n\geq M_n^t := \inf_{a \in \C}
\left( \max \left( |(-1)^n+a | ,
|(-1)^{n-1}t+a t^{n-1}|
\right)
\right).
$$
Since the function over which the last infimum is taken is coercive,
there exists an $a(t)\in \C$ such that
$M_n^t = \max \left( |(-1)^n+a(t) | ,
|(-1)^{n-1}t+a(t) t^{n-1}|
\right).$  Therefore
$$
(|t|^n+1) M_n^t \ge
|(-1)^nt^{n-1} +a(t) t^{n-1}| + |(-1)^{n}t-a(t) t^{n-1}|
\ge
|t^{n-1}+t|,
$$
and consequently, $M_n\geq |t^{n-1}+t|/(1+|t|^{n-1})$, for any $t \in I_n$.

Taking $t=(-1)^{n-1}(1+2/(n-2))$, we have $\gamma_n(0;e_{n-1})\leq \frac{1+(n/(n-2))^{n-1}}{n/(n-2)+(n/(n-2))^{n-1}}$.
\end{proof*}

\begin{proof*}{\it Proof of Lemma \ref{LaLo}.}

We may write that
\begin{multline*}
p_{n,t}(\lambda)\\
(\lambda+1)\big(\lambda^{n-1}-(1+(-1)^nt)\lambda^{n-2}+(1+(-1)^nt)\lambda^{n-3}+\ldots
\\
\ldots+(-1)^{n-2}(1+(-1)^nt)\lambda+(-1)^{n-1}\big)
\\
=:(\lambda+1)q_{n,t}(\lambda).
\end{multline*}
Since $q_{n,t}$ is a self-inversive polynomial we may make use of Theorem 1 of \cite{Lak-Los} (take $B=c=-d=1$) and we conclude that if $2\geq (n-2)|1+(-1)^nt|$ then all zeros of $q_{n,t}$ (and consequently all the zeros of $p_{n,t}$) lie on the unit circle as claimed.
\end{proof*}

\bibliographystyle{amsplain}

\begin{thebibliography}{ABC}

\bibitem{Agl-You} J.~Agler, N.~J.~Young, {\it
The hyperbolic geometry of the symmetrized bidisc}, J. Geom. Anal.
14 (2004), 375--403.

\bibitem{Au} B. Aupetit,
{\it A Primer on Spectral Theory}, Universitext, Springer Verlag,
Berlin, New York, 1991.

\bibitem{Cos} C. ~Costara, \textit{The symmetrized bidisc and Lempert's theorem},
Bull. London Math. Soc. 36 (2004), no. 5, 656--662.

\bibitem{Edi-Zwo} A.~Edigarian, W.~Zwonek,
{\it Geometry of the symmetrized polydisc}, Arch. Math. (Basel) 84
(2005), 364--374.

\bibitem{Jar-Pfl 1993} M. Jarnicki, P. Pflug,
{\it Invariant distances and metrics in complex
analysis}, de Gruyter Exp. Math. 9, de Gruyter,
Berlin, New York, 1993.

\bibitem{JP1} M. Jarnicki, P. Pflug, {\it New examples of effective formulas for holomorphically
contractible functions}, Studia Math. 135 (3) (1999), 219--230.

\bibitem{JP2} M. Jarnicki, P. Pflug,
{\it Remarks on the pluricomplex Green function}, Indiana Univ. Math. J. 44 (2) (1995), 535--543.


\bibitem{Lak-Los} P. Lakatos, L. Losonczi, {\it Polynomials with all zeros on the unit circle}, Acta Math. Hung., 125(4), (2009), 341--356.



\bibitem{Nik-Pfl-Zwo} N. Nikolov, P. Pflug, W. Zwonek, {\it The Lempert function of the symmetrized polydisc in higher dimensions is not a distance},  Proc. Amer. Math. Soc.,  135(9)  (2007), 2921--2928.

\bibitem{Nik-Pfl-Tho-Zwo} N. Nikolov, P. Pflug, P. J. Thomas, W. Zwonek, {\it Estimates of the Carathéodory metric on the symmetrized polydisc},  J. Math. Anal. Appl.  341(1)  (2008), 140--148.

\bibitem{NiThZw} N. Nikolov, P. J. Thomas, W.
Zwonek, {\it Discontinuity of the Lempert
function and the Kobayashi-Royden metric of the
spectral ball}, Integral Equations Operator
Theory 61 (2008), no. 3, 401-- 412.

\bibitem{ThTr} P. Thomas, N. V. Trao,
{\it Discontinuity of the Lempert function of the spectral ball},
Proc. Amer. Math. Soc. 138 (2010), no. 7, 2403--2412.

\bibitem{Ve}
E. Vesentini,
     {\it Maximum theorems for spectra},
in {Essays on {T}opology and {R}elated {T}opics ({M}\'emoires
              d\'edi\'es \`a {G}eorges de {R}ham)},
  111--117, Springer, New York,1970.

\bibitem{Zwo} W. Zwonek,
{\it Completeness, Reinhardt domains and the method of complex geodesics in the theory of invariant functions},
Diss. Math., 388 (2000).

\end{thebibliography}

\end{document}